\documentclass[a4paper]{amsart}
\usepackage[english]{babel}
\usepackage[utf8]{inputenc}
\usepackage[T1]{fontenc}
\usepackage{lmodern}

\usepackage[shortlabels]{enumitem}
\setlist[enumerate]{label={\arabic*.}}
\setlist[description]{font=\normalfont\itshape}

\usepackage[dvipsnames]{xcolor}
\usepackage[backref]{hyperref}
\hypersetup{
  colorlinks=true,
  linkcolor=Maroon,
  citecolor=OliveGreen
}

\usepackage{booktabs}

\usepackage{mleftright}
\mleftright

\usepackage{todonotes}

\usepackage{cleveref}
\usepackage[nopatch=footnote]{microtype}

\usepackage{setspace}
\usepackage{amsmath,amsfonts,amsthm,mathtools,commath,amssymb}
\usepackage{aliascnt}
\usepackage{graphicx}
\usepackage{tikz}
\usetikzlibrary{calc}

\def\F{\mathbf{F}}
\def\P{\mathbf{P}}
\def\Pr{\mathbf{P}}
\def\Ex{\mathbf{E}}

\def\opt{\mathrm{opt}}

\DeclarePairedDelimiter\floor{\lfloor}{\rfloor}

\DeclareMathOperator{\Mat}{M}

\newtheorem{theorem}{Theorem}[section]
\newtheorem*{theorem*}{Theorem}
\newaliascnt{lemma}{theorem}
\newtheorem{lemma}[lemma]{Lemma}
\aliascntresetthe{lemma}
\newaliascnt{proposition}{theorem}
\newtheorem{proposition}[proposition]{Proposition}
\aliascntresetthe{proposition}
\newaliascnt{corollary}{theorem}

\aliascntresetthe{corollary}
\newtheorem*{example*}{Example}

\theoremstyle{definition}
\newaliascnt{question}{theorem}

\aliascntresetthe{question}
\newaliascnt{remark}{theorem}
\newtheorem{remark}[remark]{Remark}
\aliascntresetthe{remark}
\newaliascnt{definition}{theorem}

\aliascntresetthe{definition}
\newaliascnt{example}{theorem}

\aliascntresetthe{example}
\newaliascnt{conjecture}{theorem}

\aliascntresetthe{conjecture}
\newtheorem*{conjecture*}{Conjecture}

\crefname{theorem}{theorem}{theorems}
\Crefname{theorem}{Theorem}{Theorems}
\crefname{lemma}{lemma}{lemmas}
\Crefname{lemma}{Lemma}{Lemmas}
\crefname{proposition}{proposition}{propositions}
\Crefname{proposition}{Proposition}{Propositions}
\crefname{corollary}{corollary}{corollaries}
\Crefname{corollary}{Corollary}{Corollaries}
\crefname{question}{question}{questions}
\Crefname{question}{Question}{Questions}
\crefname{remark}{remark}{remarks}
\Crefname{remark}{Remark}{Remarks}
\crefname{definition}{definition}{definitions}
\Crefname{definition}{Definition}{Definitions}
\crefname{example}{example}{examples}
\Crefname{example}{Example}{Examples}
\crefname{conjecture}{conjecture}{conjectures}
\Crefname{conjecture}{Conjecture}{Conjectures}

\title[Optimality of finite field dimension expanders]{Optimality of finite field dimension expanders}

\author{Urban Jezernik}
\address{Urban Jezernik, Faculty of Mathematics and Physics, University of Ljubljana, Jadranska 21, 1000 Ljubljana, Slovenia; Institute of Mathematics, Physics, and Mechanics, Jadranska 19, 1000 Ljubljana, Slovenia}
\email{urban.jezernik@fmf.uni-lj.si}

\author{Markus Reineke}
\address{Markus Reineke, Faculty of Mathematics, Ruhr University Bochum, 44801 Bochum, Germany}
\email{markus.reineke@rub.de}

\thanks{This research was carried out while the second named author visited the Department of Mathematics of the University of Ljubljana and benefitted from the inspiring atmosphere. UJ was supported by the Slovenian Research Agency program P1-0222 and grants J1-50001, J1-70033. MR was supported by DFG-GRK 2240.}

\date{}

\begin{document}
\baselineskip=13pt 

\begin{abstract} 
We show that random tuples of matrices over a finite field are uniform dimension expanders for every expansion coefficient below the optimal one, using elementary counting arguments. 
The same holds for the optimal coefficient along a set of dimensions of density $1$. 
By contrast, along Fibonacci dimensions random pairs of matrices fail to be optimal dimension expanders with probability bounded away from zero.
\end{abstract}

\maketitle

\section{Introduction}

Dimension expanders were introduced by Lubotzky and Zelmanov \cite{LZ}  as a linear algebra analogue of expander graphs \cite{kowalski2019expanders}. 
Let $F$ be a field and let $k\geq 2$. A $k$-tuple $T_1,\dots,T_k\in \Mat_n(F)$ of $n\times n$-matrices over $F$ is called an \emph{$\epsilon$-expander} if 
\[
\dim(U+T_1U+\dots+T_kU)\geq(1+\epsilon)\dim U
\]
for all subspaces $U\leq F^n$ of dimension at most $n/2$. 
The positive real number $\epsilon$ is called the \emph{expansion coefficient}. 
One is interested in uniform expansion, that is,  sequences of $\epsilon$-expanders as $n \to \infty$ for a fixed $\epsilon>0$. 

Existence of uniform dimension expanders was shown for base fields of characteristic zero in \cite{LZ}, and by Bourgain \cite{Bo} for finite fields, building on the approach of Dvir and Shpilka \cite{dvir2011towards}. In both cases, the proofs are explicit constructions using advanced analytic techniques. As remarked in \cite{dvir2011towards}, over finite fields a more elementary probabilistic argument shows that random tuples of matrices are dimension expanders for some small expansion coefficient. The more relevant question is thus how large the expansion coefficient can be.

In \cite{reineke2024dimension}, dimension expanders were interpreted in terms of representations of generalized Kronecker quivers. Results on general subrepresentations \cite{Sc} were used to give a new proof of existence of expanders over algebraically closed base fields, as well as to provide optimal bounds for the expansion coefficients.

In this note, we adapt the methods of \cite{reineke2024dimension} to the case of finite base fields, by replacing an argument using the Zariski topology by an elementary counting argument. 
With this approach, we show that random tuples are uniform expanders for all expansion coefficients smaller than the optimal one computed in \cite{reineke2024dimension}, see \Cref{thm1}.
There are obstructions of number-theoretic nature to adapt this argument to the optimal expansion case, for which we establish a slightly weaker result of expansion along suitable dimensions, see \Cref{thm:optimal-expansion-along-suitable-dimensions}.
These obstructions are genuine. 
Along the dimensions $n= 2 F_{2j}$ given by Fibonacci numbers, a random pair of matrices fails to be an optimal expander with probability bounded away from zero, see \Cref{thm:fibonacci}.

\section{The quiver approach to dimension expanders}

As in \cite{reineke2024dimension}, we view a $k$-tuple $T_1,\dots,T_k\in \Mat_n(F)$ as a \emph{quiver representation}
\begin{equation} \label{eq:quiver-representation}
  1_n,T_1,\dots,T_k \colon F^n\rightarrow F^n
\end{equation}
of the generalized Kronecker quiver with $k+1$ arrows. Its subrepresentations are the pairs of subspaces $U \leq W \leq F^n$ with $T_i U \leq W$ for all $i$. We call $(\dim U, \dim W)$ the dimension vector of a subrepresentation. A subspace $U$ that violates the expansion condition gives the subrepresentation $(U, U + T_1 U + \cdots + T_k U)$, and conversely $U$ can be extracted from it. Thus the tuple is an $\epsilon$-expander if and only if \eqref{eq:quiver-representation} has no subrepresentations of dimension vector $(d,e)$ with $d \leq n/2$ and $e < (1+\epsilon)d$. 

In \cite{reineke2024dimension}, the optimal uniform expansion coefficient was computed for all algebraically closed fields $F$ using methods from quiver representation theory, as follows.
For dimension vectors $(d,e), (d',e')$, the \emph{Euler form} is $\langle (d,e), (d',e') \rangle = d d' + e e' - (k+1) d e'$.
Denote by $S(d,e)\subseteq \Mat_n(F)^k$ the subset of tuples of matrices for which \eqref{eq:quiver-representation} has a subrepresentation of dimension vector $(d,e)$. Then $S(d,e)$ is Zariski-closed, and of positive codimension if and only if $\langle (d,e), (n-d, n-e) \rangle < 0$,
as follows from Schofield's theory of general subrepresentations \cite{Sc}. 
Thus an $\epsilon$-expander exists if this condition holds for all $d\leq n/2$ and $e\leq(1+\epsilon)d$. This is true for all $n$ if and only if $\epsilon\leq \epsilon_{\opt}$, where
\[
\epsilon_{\opt}=\frac{1}{2}\left(k+1-\sqrt{k^2-2k+5}\right)
\]
is the smaller root of $P(x) = x^2 - (k+1)x + (k-1)$. We write $\widehat \epsilon_{\opt} = k+1-\epsilon_{\opt}$ for the other root. Since $P(1) = -1$, we have $\epsilon_{\opt} <  1 < \widehat \epsilon_{\opt}$. Note that $\epsilon_{\opt}$ is irrational for all $k \geq 2$.

\section{Optimally expanding tuples of matrices}

We follow the same quiver strategy as above over a finite field $\F_q$. We will bound the probability of a tuple failing to be an $\epsilon$-expander by computing,  for a random tuple of matrices, the expected number of subrepresentations of dimension vector $(d,e)$ with $d \leq n/2$ and $e < (1+\epsilon)d$. This expected number is governed by the same Euler form as the codimension of $S(d,e)$ above. Let us abbreviate
\[
E(d,e) = \langle (d,e), (n-d, n-e) \rangle = d(n-d) + e(n-e) - (k+1) d (n - e).
\]

\subsection{Counting subrepresentations}

Recall that the number of $d$-dimensional subspaces of $\F_q^n$ is given by the $q$-binomial coefficient
\[
\binom{n}{d}_q
=
q^{d(n-d)}
\frac{\prod_{i=n-d+1}^{n}(1-q^{-i})}
{\prod_{i=1}^{d}(1-q^{-i})}
\leq
C_q q^{d(n-d)},
\]
where $C_q = \prod_{i=1}^\infty (1 - q^{-i})^{-1}$, a constant depending only on $q$.
If both $d$ and $n-d$ tend to infinity, then the $\leq$ above can be replaced by $\sim$.

\begin{lemma}
  \label{lem:expected-number-of-subrepresentations}
Let $T_1,\dots,T_k \in \Mat_n(\F_q)$ be independent uniformly random.
The expected number of subrepresentations of \eqref{eq:quiver-representation} of dimension vector $(d,e)$ is at most $C_q^2 q^{E(d,e)}$, and moreover asymptotic to it as $d, e-d, n-d, n-e$ all tend to infinity.
\end{lemma}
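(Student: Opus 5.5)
By linearity of expectation, the expected number of subrepresentations of dimension vector $(d,e)$ is a sum over all flags $U \leq W \leq \F_q^n$ with $\dim U = d$ and $\dim W = e$. Each summand is the probability that $T_iU \leq W$ for all $i$. The plan is to compute this probability exactly, count the flags, and multiply the two.

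For a fixed pair $(U,W)$, the condition $T_iU\leq W$ is a linear condition on $T_i$. Choose a basis of $\F_q^n$ extending a basis of $U$. A matrix $T_i$ is uniform precisely when the images of the $n$ basis vectors are independent and uniform in $\F_q^n$. The condition concerns only the $d$ basis vectors of $U$: their images must lie in $W$, which has probability $(q^{e}/q^n)^d = q^{-d(n-e)}$. By independence of the $T_i$, the probability that all $k$ conditions hold is $q^{-kd(n-e)}$. This value is the same for every pair $(U,W)$ of the given dimensions.

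Next we count the pairs. The number of flags equals $\binom{n}{e}_q\binom{e}{d}_q$: first choose $W$, then $U$ inside $W$. By the bound stated before the lemma, this is at most $C_q^2 q^{e(n-e)+d(e-d)}$. Multiplying by the probability above, the expected number is at most
\[
C_q^2\, q^{\,e(n-e)+d(e-d)-kd(n-e)}.
\]
The exponent matches $E(d,e)$, since
\[
d(n-d) - d(n-e) = d(e-d),
\]
which gives
\[
E(d,e)=d(n-d)+e(n-e)-(k+1)d(n-e)=e(n-e)+d(e-d)-kd(n-e).
\]
This proves the upper bound.

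For the asymptotic statement, the same $q$-binomial estimate upgrades $\leq$ to $\sim$ once both the lower and upper indices differ from the extremes by quantities tending to infinity. For $\binom{n}{e}_q$ this requires $e\to\infty$ and $n-e\to\infty$. For $\binom{e}{d}_q$ it requires $d\to\infty$ and $e-d\to\infty$. All four follow from the hypotheses, since $e\geq d\to\infty$. Hence the expected number is asymptotic to $C_q^2q^{E(d,e)}$.

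Nothing here is a genuine obstacle; the only step needing care is the exact identification of the exponent with $E(d,e)$, carried out above.
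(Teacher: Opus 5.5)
Your proof is correct and follows the paper's argument: linearity of expectation over flags $U \leq W$, the probability $q^{-kd(n-e)}$ that the images of a basis of $U$ land in $W$, and the $q$-binomial bound and asymptotic. The only cosmetic difference is that you count flags as $\binom{n}{e}_q\binom{e}{d}_q$ (choosing $W$ first), while the paper uses the equal product $\binom{n}{d}_q\binom{n-d}{e-d}_q$, and you write out the exponent identification with $E(d,e)$ that the paper leaves implicit.
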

\begin{proof}
A subrepresentation of dimension vector $(d,e)$ consists of a $d$-dimensional subspace $U$ and an $e$-dimensional subspace $W$ containing $U$, with the property that $T_i U \leq W$ for all $i$. 
There are $\binom{n}{d}_q$ choices for $U$ and $\binom{n-d}{e-d}_q$ choices for $W$ containing $U$.
For any fixed $U$ and $W$, each $T_i$ restricts to a uniformly random linear map $U \to \F_q^n$, which lands in $W$ with probability $q^{-d(n-e)}$.
Multiplying these factors and taking properties of $q$-binomial coefficients into account gives the claim.
\end{proof}

Thus the probability that a random tuple is not an $\epsilon$-expander is at most
\begin{equation}
  \label{eq:union-bound}
  \sum_{d = 1}^{\lfloor n/2 \rfloor}
  \sum_{d \leq e < (1+\epsilon) d}
  C_q^2 q^{E(d,e)},
\end{equation}
by the union bound and the lemma above.
To estimate this sum, we write $e = d + r$ with $0 \leq r < \epsilon d$. The next lemma splits the exponent $E$ into two terms. The first measures how far $r/d$ is from $\epsilon_{\opt}$, and the second how far $d$ is from $n/2$.

\begin{lemma} \label{lem:E}
For $0 \leq r \leq d \leq n/2$, we have
\[
E(d,d+r)
=
-(\epsilon_{\opt} d - r)(\widehat \epsilon_{\opt} d - r)
-(n-2d)((k-1)d - r),
\]
and both terms on the right are increasing in $r$.
\end{lemma}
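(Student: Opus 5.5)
We verify the identity by direct expansion and then read off monotonicity from the signs of the factors.

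For the identity, we use that $\epsilon_{\opt}$ and $\widehat\epsilon_{\opt}$ are the roots of $P(x)=x^2-(k+1)x+(k-1)$. Vieta gives $\epsilon_{\opt}+\widehat\epsilon_{\opt}=k+1$ and $\epsilon_{\opt}\widehat\epsilon_{\opt}=k-1$. Hence the first term on the right is
\[
-(\epsilon_{\opt} d - r)(\widehat \epsilon_{\opt} d - r) = -(k-1)d^2+(k+1)dr-r^2 .
\]
The second term expands to
\[
-(n-2d)((k-1)d-r) = -(k-1)nd + nr + 2(k-1)d^2 - 2dr .
\]
Their sum is therefore
\[
(k-1)d^2+(k-1)dr-r^2-(k-1)nd+nr .
\]
On the left, substituting $e=d+r$ into $E(d,e)=d(n-d)+e(n-e)-(k+1)d(n-e)$ and collecting monomials in $d^2, dr, r^2, nd, nr$ should give the same polynomial. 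Indeed:
\begin{itemize}
\item $d(n-d)=nd-d^2$;
\item $(d+r)(n-d-r)=nd+nr-d^2-2dr-r^2$;
\item $-(k+1)d(n-d-r)=-(k+1)nd+(k+1)d^2+(k+1)dr$.
\end{itemize}
Adding these yields $(k-1)d^2+(k-1)dr-r^2-(k-1)nd+nr$, which matches.

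For monotonicity in $r$, take the two terms separately.

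The second term, $-(n-2d)((k-1)d-r)$, is affine in $r$ with slope $n-2d\ge 0$, since $d\le n/2$. It is therefore nondecreasing.

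The first term is $f(r)=-(\epsilon_{\opt}d-r)(\widehat\epsilon_{\opt}d-r)$, a downward parabola in $r$. Its vertex is at $r=\tfrac{k+1}{2}d$ and its derivative is
\[
f'(r)=(\epsilon_{\opt}+\widehat\epsilon_{\opt})d-2r=(k+1)d-2r .
\]
For $0\le r\le d$ we have $f'(r)\ge (k-1)d\ge 0$ because $k\ge 2$, so $f$ is increasing on the whole range.

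The only point needing care is the range restriction $r\le d$, together with the fact that the vertex $\tfrac{k+1}{2}d$ lies at or beyond $d$. Everything else is routine bookkeeping.
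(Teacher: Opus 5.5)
Your proof is correct and is essentially the paper's argument. The paper reaches the same identity by writing $n-d = d + (n-2d)$, so that the $n$-free part is visibly $-d^2P(r/d)$, whereas you expand both sides and match coefficients via Vieta. In both, monotonicity of the first term comes from the vertex of the parabola lying at $\tfrac{k+1}{2}d \geq d$, and of the second from $n-2d \geq 0$.
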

\begin{proof}
Write $n - d = d + (n-2d)$ in the definition of $E$. The terms without $n-2d$ add up to $-d^2 P(r/d)$,
and the terms with $n-2d$ add up to $-(n-2d)((k-1)d - r)$. Since the roots of $P$ are $\epsilon_{\opt}$ and $\widehat \epsilon_{\opt}$, we have $d^2 P(r/d) = (\epsilon_{\opt} d - r)(\widehat \epsilon_{\opt} d - r)$. The first term is increasing in $r$ because $P$ is decreasing on $[0, (\epsilon_{\opt} + \widehat \epsilon_{\opt})/2] = [0,(k+1)/2]$, which contains $r/d \leq 1$. The second is increasing in $r$ because $n - 2d \geq 0$.
\end{proof}

\subsection{Random matrices are almost optimal dimension expanders}

The following result shows that random tuples of matrices are almost optimal dimension expanders. 
This can be viewed as an analogue of Friedman's theorem \cite{friedman2008alon} that random regular graphs are almost Ramanujan expanders with high probability as the number of vertices tends to infinity. 

\begin{theorem}\label{thm1}
Let $k \geq 2$. For any $\eta > 0$, a random tuple of $k$ matrices in $\Mat_n(\F_q)$ is an $(\epsilon_{\opt} - \eta)$-dimension expander with high probability as $n \to \infty$.
\end{theorem}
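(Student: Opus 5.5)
We bound the union bound \eqref{eq:union-bound} with $\epsilon = \epsilon_{\opt} - \eta$ and show it tends to $0$ as $n \to \infty$. Write $e = d+r$ with $0 \le r < \epsilon d$. We may assume $\eta < \epsilon_{\opt}$, and we note that $r \le d$, so \Cref{lem:E} applies.

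First, we bound the exponent uniformly in $r$. By \Cref{lem:E}, both terms of $E(d,d+r)$ are increasing in $r$, so we may replace $r$ by $\epsilon d$. For the first term,
\[
(\epsilon_{\opt} d - r)(\widehat\epsilon_{\opt} d - r) \ge \eta d \cdot (\widehat\epsilon_{\opt} - \epsilon_{\opt}) d = c\, d^2,
\]
where $c = \eta(\widehat\epsilon_{\opt}-\epsilon_{\opt}) > 0$. For the second term,
\[
(n-2d)((k-1)d - r) \ge (n-2d)(k-1-\epsilon)d \ge 0,
\]
since $\epsilon < 1 \le k-1$. Hence
\[
E(d,e) \le -c\,d^2 \quad \text{for all } d \le n/2 \text{ and } d \le e < (1+\epsilon)d .
\]
Note that $c$ does not depend on $n$.

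Next, we sum. For fixed $d$ there are at most $\epsilon d + 1 \le 2d$ admissible values of $e$. The sum \eqref{eq:union-bound} is therefore at most
\[
C_q^2 \sum_{d\ge 1} 2d\, q^{-c d^2}.
\]
This series converges, but its value is not small, so a crude bound does not suffice for small $d$. The difficulty is that small $d$ gives only a constant bound, and we must recover $n$-dependence from the second term of \Cref{lem:E}.

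The main obstacle is therefore handling $d$ small relative to $n$, and we split into two ranges. For $d \le n/4$ we have $n - 2d \ge n/2$, so
\[
E(d,e) \le -c\,d^2 - \tfrac{n}{2}(k-1-\epsilon)\,d \le -c' n d,
\]
with $c' = (k-1-\epsilon)/2 > 0$. This range then contributes at most
\[
C_q^2 \sum_{d \ge 1} 2d\, q^{-c' n d} = O\bigl(q^{-c' n}\bigr).
\]
For $n/4 < d \le n/2$ we use $E(d,e) \le -c\,d^2 \le -c n^2/16$. With at most $n^2$ terms, this range contributes at most
\[
C_q^2\, n^2\, q^{-c n^2/16} \to 0 .
\]
Both contributions vanish as $n \to \infty$, so the failure probability tends to $0$, which proves \Cref{thm1}.

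One small check remains: the case $r = 0$, that is $e = d$, is included in the count above, and $E(d,d) = -d^2P(0) - (n-2d)(k-1)d < 0$, consistent with the bound, so no exceptional terms arise.
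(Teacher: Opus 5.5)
Your proof is correct and follows essentially the paper's argument: the same bound on both terms of \Cref{lem:E}, and the same split into $d \le n/4$ (controlled by the second term) and $d > n/4$ (controlled by the first term), which the paper places in a footnote. The differences are cosmetic: you use the constant $\eta(\widehat\epsilon_{\opt}-\epsilon_{\opt})$ where the paper uses $P(\epsilon)$, and you sum each range separately rather than bounding all at most $n^2$ terms by a single $q^{-cn}$.
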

\begin{proof}
Let $\epsilon = \epsilon_{\opt} - \eta$. Write $e = d + r$ with $0 \leq r < \epsilon d$ in \eqref{eq:union-bound}. As $r/d < \epsilon < \epsilon_{\opt}$, the first term in \Cref{lem:E} is at most $-P(\epsilon) d^2$, and the second term is at most $-(k-1-\epsilon)d(n-2d)$. Hence
$E(d,d+r) \leq -P(\epsilon) d^2 - (k-1-\epsilon) d(n-2d)$,
where $P(\epsilon) > 0$ and $k-1-\epsilon > 0$ since $\epsilon < \epsilon_{\opt} < 1$. 
The exponent is thus at most $-cn$ for some $c = c(k,\eta) > 0$.\footnote{If $d \leq n/4$, the second term is $\leq -(k-1-\epsilon)n/2$, and otherwise the first one is $\leq -P(\epsilon)n^2/16$.} As there are at most $n^2$ terms, the sum \eqref{eq:union-bound} is at most $C_q^2 n^2 q^{-cn}$, which tends to $0$ as $n \to \infty$.
\end{proof}

\subsection{Dense existence of optimal dimension expanders}

Random regular graphs are only almost Ramanujan. In fact, by the spectacular result of \cite{huang2024ramanujan}, about $69 \%$ of random regular graphs are Ramanujan, with a well-understood limiting distribution of the spectral gap. We prove here that for dimension expanders, optimal dimension expansion is abundant as long as the dimensions $n$ along which we are considering the expansion are \emph{suitable} in the sense of the following result.
For a real number $x$, write $\{ x \} = x - \floor{x}$ for the fractional part of $x$.

\begin{theorem} \label{thm:optimal-expansion-along-suitable-dimensions}
Let $k \geq 2$. Let $(n_i)_{i \geq 1}$ be a sequence of positive integers tending to infinity. If $n_i = 2 d_i$ is even for infinitely many $i$, assume that
\[
d_i \left\{ \epsilon_{\opt} d_i \right\} \to \infty
\]
along those indices. Then a random tuple of $k$ matrices in $\Mat_{n_i}(\F_q)$ is an $\epsilon_{\opt}$-dimension expander with high probability as $i \to \infty$.
\end{theorem}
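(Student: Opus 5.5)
We bound the union bound \eqref{eq:union-bound} with $\epsilon = \epsilon_{\opt}$, and show it tends to $0$ along $n = n_i$. Write $e = d + r$ with $0 \le r < \epsilon_{\opt} d$; since $r$ is an integer, in fact $r \le \lceil \epsilon_{\opt} d\rceil - 1 = \floor{\epsilon_{\opt} d}$, using that $\epsilon_{\opt} d$ is never an integer (irrationality of $\epsilon_{\opt}$). By \Cref{lem:E}, both terms of $E(d,d+r)$ are increasing in $r$. The exponent is therefore maximised at $r_0 = \floor{\epsilon_{\opt} d}$, where
\[
E(d,d+r) \le -\{\epsilon_{\opt} d\}\,(\widehat\epsilon_{\opt} d - r_0) - (n-2d)\bigl((k-1)d - r_0\bigr).
\]
Moreover, for smaller $r = r_0 - s$ the first factor improves: $(\epsilon_{\opt} d - r)(\widehat\epsilon_{\opt}d - r) \ge (s + \{\epsilon_{\opt} d\})(\widehat\epsilon_{\opt}-\epsilon_{\opt}) d$. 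So the sum over $r$ for fixed $d$ is a geometric-type series dominated by the top term, up to a constant factor. Note $\widehat\epsilon_{\opt} d - r_0 \ge (\widehat\epsilon_{\opt} - \epsilon_{\opt})d = \sqrt{k^2-2k+5}\,d \ge 2d$, and $(k-1)d - r_0 \ge (k-1-\epsilon_{\opt})d$, which is a positive constant times $d$.

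Next we split the sum over $d$ into two ranges. For $n - 2d \ge 1$, the second term alone gives exponent at most $-(k-1-\epsilon_{\opt}) d (n-2d)$. Summing over $r$ (at most $d$ values) and over $d$, we get
\[
\sum_{1\le d<n/2} d\, q^{-c d(n-2d)} .
\]
Both small $d$ and $d$ near $n/2$ must be controlled. Near $n/2$, with $m = n-2d \ge 1$, the first term is always $\le -\{\epsilon_{\opt} d\}\cdot 2d$; this may be tiny, so we rely on the second term $\le -c\,m\,d \le -c\, m\, n/4$ for $d\ge n/4$. That gives $\sum_{m\ge1} n\, q^{-c m n/4} \to 0$. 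For $d \le n/4$, the bound is $d\,q^{-c d n/2}$, and summing gives $O(q^{-cn/2})$ including $d=1$. Both are handled uniformly in $n$ and independent of any hypothesis. This is essentially the argument of \Cref{thm1}, and one must be careful to sum over $r$ rather than count $n^2$ terms crudely; the geometric decay in $s$ makes this fine.

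The remaining and only delicate case is $n = 2d$ even, $d = d_i$, where the second term vanishes and only $-\{\epsilon_{\opt} d\}(\widehat\epsilon_{\opt}d - r)$ survives. This is exactly where the hypothesis enters. Using the geometric decay in $s$, the contribution of $d=n/2$ is at most
\[
C_q^2 \sum_{s\ge 0} q^{-2d(s+\{\epsilon_{\opt} d\})} \le C_q^2\, \frac{q^{-2 d\{\epsilon_{\opt} d\}}}{1-q^{-2d}},
\]
which tends to $0$ precisely because $d_i\{\epsilon_{\opt} d_i\}\to\infty$. For odd $n_i$ this term is absent, so no hypothesis is needed. I expect the main obstacle to be bookkeeping in the middle range: the uniform bound on the $r$-sum must not lose a factor that could swamp $q^{-2d\{\epsilon_{\opt}d\}}$. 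Also, for $n-2d\ge1$ the second term must carry everything without help from the possibly vanishing fractional part. Bounding the first term just by $0$ there and summing $d\cdot q^{-c d(n-2d)}$ as above suffices.
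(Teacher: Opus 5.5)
Your proposal is correct and is essentially the paper's proof. For $d<n/2$ you drop the first term of the decomposition of $E(d,d+r)$ and use the second to get exponent at most $-(k-1-\epsilon_{\opt})d(n-2d)$. For $d=n/2$ you sum the geometric series in $s=\lfloor \epsilon_{\opt}d\rfloor-r$ with ratio $q^{-(\widehat\epsilon_{\opt}-\epsilon_{\opt})d}$, whose leading factor vanishes because $d_i\{\epsilon_{\opt}d_i\}\to\infty$.

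The only difference is cosmetic. The paper handles all $d<n/2$ at once via $d(n-2d)\ge (n-1)/2$ and a crude count of at most $n^2$ terms. So your split at $n/4$ is unnecessary, and careful summation over $r$ matters only at $d=n/2$.
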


\begin{remark}
The condition on the sequence is satisfied by a subset of integers of density $1$, for the following reason. 
Let $A = \{ d \geq 1 \mid \{ \epsilon_{\opt} d \} \geq 1/\sqrt{d} \}$. 
The condition holds along any sequence of even dimensions $n_i = 2d_i$ with $d_i \in A$. 
The complement of the set $A$ has density $0$.\footnote{Since $\epsilon_{\opt}$ is irrational, the values $\{ \epsilon_{\opt} d \}$ are equidistributed in $[0,1]$, so for each fixed $\delta > 0$ the set of $d$ with $\{ \epsilon_{\opt} d \} < \delta$ has density $\delta$. Now if $d \notin A$ and $d > \delta^{-2}$, then $\{ \epsilon_{\opt} d \} < 1/\sqrt{d} < \delta$. Hence the complement of $A$ has upper density at most $\delta$, and taking $\delta \to 0$ it thus has density $0$.}
\end{remark}

\begin{proof}
Let $\epsilon = \epsilon_{\opt}$. We again bound the sum \eqref{eq:union-bound}, in which $e$ runs from $d$ to $\floor{(1+\epsilon)d}$ as $\epsilon$ is irrational. Using \Cref{lem:E} with $r = \epsilon d$ gives
$E(d,e) \leq - (k - 1 - \epsilon) d (n - 2d)$.
For all $d < n/2$, we have $d(n-2d) \geq (n-1)/2$, so these terms contribute at most $C_q^2 n^2 q^{-c n}$ for some $c = c(k) > 0$, which tends to $0$ as $n \to \infty$. This covers the case of odd $n$, and for even $n$ it remains to consider the terms with $d = n/2$. Write $e = d + r$ with $0 \leq r \leq \lfloor \epsilon d \rfloor$. By \Cref{lem:E}, the exponent is
\[
E(d,d+r) 
=
- (\epsilon d - r) \cdot (\widehat \epsilon d - r)
\leq
- \left( \{ \epsilon d \} + (\lfloor \epsilon d \rfloor - r) \right) 
\cdot
( \widehat \epsilon - \epsilon ) d.
\]
Writing $t = \lfloor \epsilon d \rfloor - r \geq 0$, these terms altogether thus contribute at most
\[
C_q^2 \sum_{t \geq 0} q^{- ( \hat \epsilon - \epsilon ) d (\{ \epsilon d \} + t)}
=
C_q^2 q^{- ( \hat \epsilon - \epsilon ) d \{ \epsilon d \}} \frac{1}{1 - q^{- ( \hat \epsilon - \epsilon ) d}},
\]
which tends to $0$ since $d \{ \epsilon d \} \to \infty$ by our assumption on the even dimensions.
\end{proof}

\begin{remark}
The condition $d\{\epsilon_{\opt} d\} \to \infty$ fails exactly along sequences of denominators $d$ of unusually good rational approximations of $\epsilon_{\opt}$, which are essentially the convergents of its continued fraction.\footnote{If $d_j \{\epsilon_{\opt} d_j \} \leq C$, then $r_j = \lfloor \epsilon_{\opt} d_j \rfloor$ satisfies $|\epsilon_{\opt} - r_j/d_j| = \{ \epsilon_{\opt} d_j \}/d_j \leq C/d_j^2$. By Legendre's theorem, if $C < 1/2$, then $r_j/d_j$ is a convergent of the continued fraction of $\epsilon_{\opt}$.}
\end{remark}

\subsection{Non-optimal expansion along Fibonacci dimensions}
\label{sec:fibonacci}

We now show that in cases where $d$ is a denominator of a convergent of the continued fraction of $\epsilon_{\opt}$, optimality can genuinely fail. 
We do this for $k = 2$, where $P(x) = x^2 - 3x + 1$ has roots $\epsilon_{\opt} = (3 - \sqrt{5})/2$ and $\widehat \epsilon_{\opt} = \epsilon_{\opt}^{-1}$. This is related to the golden ratio and thus to Fibonacci numbers $F_j$ with $F_0 = 0, F_1 = 1$.

\begin{theorem} \label{thm:fibonacci}
Let $k = 2$, let $n_j = 2 F_{2j}$, and let $p_j$ be the probability that a random pair in $\Mat_{n_j}(\F_q)$ is an $\epsilon_{\opt}$-dimension expander. Then
\[
\frac{\lambda_q}{1 + \lambda_q}
\leq
\liminf_{j \to \infty} (1 - p_j)
\leq
\limsup_{j \to \infty} (1 - p_j)
\leq
\lambda_q,
\qquad
\text{where }
\lambda_q = C_q^2 q^{-1}.
\]
\end{theorem}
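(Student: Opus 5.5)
We work with $k=2$, $d=d_j=F_{2j}$, $n=n_j=2d$ and $\epsilon=\epsilon_{\opt}=\varphi^{-2}$, where $\varphi$ is the golden ratio, so $\widehat\epsilon=\varphi^2$.

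\emph{The distinguished dimension vector.} Binet's formula gives
\[
\epsilon F_{2j}-F_{2j-2}=\varphi^{-2}\cdot\frac{(1-\varphi^{-4})\,\varphi^{-2j}}{\sqrt5}>0,
\]
which is of order $1/d$. Hence $\lfloor\epsilon d\rfloor=F_{2j-2}$ and $d\{\epsilon d\}\to c>0$ is bounded. Set $r_0=F_{2j-2}$. The Cassini-type identity $F_{2j}^2-3F_{2j}F_{2j-2}+F_{2j-2}^2=1$ can be checked at $j=1,2$ and proved by induction. Combined with \Cref{lem:E}, it gives
\[
E(d,d+r_0)=-d^2P(r_0/d)=-1.
\]
Since $r_0<\epsilon d$, every subrepresentation of dimension vector $(d,d+r_0)$ witnesses a failure of $\epsilon_{\opt}$-expansion.

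\emph{Upper bound.} We take the union bound \eqref{eq:union-bound} and split it exactly as in the proof of \Cref{thm:optimal-expansion-along-suitable-dimensions}. The terms with $d'<n/2$ contribute $O(n^2q^{-cn})\to0$. For $d'=n/2$, write $r=r_0-t$. By \Cref{lem:E} the exponent is
\[
E(d,d+r_0-t)=-1-t(\widehat\epsilon-\epsilon)d-t^2,
\]
so the terms with $t\ge1$ sum to $O(q^{-(\widehat\epsilon-\epsilon)d})\to0$. Thus $1-p_j\le \Ex X+o(1)$, where $X$ counts subrepresentations of dimension vector $(d,d+r_0)$. By \Cref{lem:expected-number-of-subrepresentations}, $\Ex X\le C_q^2q^{-1}=\lambda_q$. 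It is also asymptotic to $\lambda_q$, since $d$, $r_0$, $n-d$ and $n-d-r_0$ all tend to infinity. This gives the $\limsup$ bound.

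\emph{Lower bound via the second moment.} Since $X>0$ implies failure, we have
\[
1-p_j\ge \Pr(X>0)\ge (\Ex X)^2/\Ex X^2 .
\]
It therefore suffices to show $\Ex X^2\le\lambda_q+\lambda_q^2+o(1)$, because then the ratio tends to $\lambda_q^2/(\lambda_q+\lambda_q^2)=\lambda_q/(1+\lambda_q)$. We expand $\Ex X^2=\sum\Pr(\text{both})$ over ordered pairs $(U,W)$, $(U',W')$ of subrepresentations. The diagonal pairs contribute $\Ex X\to\lambda_q$.

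For the off-diagonal pairs, we stratify by $a=\dim(U\cap U')$ and by the relative position of $W,W'$ with respect to $U+U'$; for example, record $b=\dim(W\cap W')$ together with the dimensions of $(U+U')\cap W$ and $(U+U')\cap W'$. Given the configuration, the event that both are subrepresentations says that each $T_i$ maps $U$ into $W$ and $U'$ into $W'$. We compute its probability by choosing a basis of $U+U'$ adapted to $U\cap U'$. It equals $q^{-2(\dim\text{ of the constraint space})}$, where the constraint space is the kernel of $\Mat(U+U',\F_q^n)\to\dots$. We then multiply by the number of configurations, which is a product of $q$-binomials bounded by \Cref{lem:expected-number-of-subrepresentations}-type constants. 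The generic stratum ($a=0$ and $W,W'$ in general position, so $b=\max(0,2(d+r_0)-n)$) should give $\lambda_q^2(1+o(1))$, since there the two events are asymptotically independent.

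The main obstacle is showing that all non-generic strata together contribute $o(1)$. The plan is to rewrite the exponent of each stratum as
\[
-2+E\bigl(\text{intersection}\bigr)+E\bigl(\text{sum}\bigr)+(\text{error}),
\]
in the spirit of Euler-form subadditivity. Here ``intersection'' is the subrepresentation $(U\cap U',W\cap W')$ and ``sum'' is $(U+U',W+W')$. We then use \Cref{lem:E}, and the fact that $E(d',e')\le -c\,n$ away from the dimension vectors $(0,\ast)$, $(d,d+r_0)$ and $(n,n)$, to get a gain of $q^{-cn}$ in every intermediate stratum. This gain beats the polynomially many strata.
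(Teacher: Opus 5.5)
Your upper bound is fine. There is a harmless slip: by \Cref{lem:E}, $E(d,d+r_0-t)=-1-t(3d-2r_0)-t^2$, which is even smaller than what you wrote. The claim $\Ex X\to\lambda_q$ is also correct.

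The lower bound, however, breaks down. With $X$ the number of \emph{subrepresentations} of dimension vector $(d,d+r_0)$, the second moment does not satisfy $\Ex X^2\le\lambda_q+\lambda_q^2+o(1)$; it tends to infinity. The stratum $W=W'$, $\dim(U\cap U')=d-1$ already shows this. There both pairs are subrepresentations if and only if $T_1V,T_2V\le W$ for the $(d+1)$-dimensional $V=U+U'$. Count the choices of $W$, then of $V\le W$, then of an ordered pair of distinct hyperplanes $U,U'$ of $V$, and multiply by the probability. This stratum alone contributes at least
\[
q^{(d+r_0)(d-r_0)}\cdot q^{(d+1)(r_0-1)}\cdot q^{2d}\cdot q^{-2(d+1)(d-r_0)}=q^{3r_0-d-2}=q^{F_{2j-4}-2}\longrightarrow\infty .
\]
The mechanism is a heavy tail. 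Let $L_W=\{w\in W\mid T_1w,T_2w\in W\}$. With probability roughly $q^{-3(d-r_0)}$ some $W$ has $\dim L_W=d+1$, and such a $W$ carries about $q^{d}$ subrepresentations $(U,W)$. Since $r_0/d\to\epsilon_{\opt}>1/3$, the resulting factor $q^{2d}$ in $X^2$ beats that cost. Taking $\dim V=d+s$ with $s\approx(3r_0-d)/2$ even gives $\Ex X^2\ge q^{(3r_0-d)^2/4-O(1)}$. Hence $(\Ex X)^2/\Ex X^2\to0$, and Cauchy--Schwarz yields nothing.

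Correspondingly, the input you planned to use is false. It is not true that $E(d',e')\le -cn$ away from $(0,\ast)$, $(d,d+r_0)$, $(n,n)$. For instance $E(d-1,d+r_0)=3(d-r_0)-2>0$, and $E(d',n)=d'(n-d')>0$. Note that the intersection $(U\cap U',W\cap W')$ in the stratum above is exactly $(d-1,d+r_0)$.

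What is missing is a random variable that does not see the multiplicity $\binom{\dim L_W}{d}_q$. The paper counts \emph{witnesses}: subspaces $W$ of dimension $d+r_0$ with $\dim L_W\ge d$. This still detects failure of expansion. Moreover $0\le X_d-X\le C_q q^d X_{d+1}$, together with $\Ex X_{d+1}\ll q^{-1.5d}$, still gives $\Ex X\to\lambda_q$. Now $\Ex X(X-1)$ only involves pairs $W\neq W'$.

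To bound $\Pr(W,W'\text{ witnesses})$ by pairs of subrepresentations, you need an extra lemma: $\dim(L_W\cap L_{W'})\ge d-3c$, where $c$ is the codimension of $W\cap W'$ in $W$. It lets you pick $U\le L_W$, $U'\le L_{W'}$ with $\dim(U\cap U')\ge d-3c$. It is this restriction that makes the pair exponent $-u^2+w(3u-w-2r_0)-2$ at most $-1-\delta d$ for all $w\ge1$; without it the exponent is not uniformly small when $W\cap W'$ has small codimension. The strata with $w=0$ are exactly independent and contribute at most $(\Ex X_d)^2\to\lambda_q^2$.
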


For $q = 2$, we have $C_2 \approx 3.463$, $\lambda_2 \approx 5.995$, so a random pair fails to be an optimal expander with probability at least about $0.857$. 
For large $q$, both bounds are of order $1/q$, so failure of optimality is a phenomenon of small finite fields.

Let us write $d = F_{2j}$, $n = 2d$, and $r = F_{2j-2}$. 
The exponent in \eqref{eq:union-bound} of the corresponding term with $e = d + r$ is $E(d,d+r) = -(d^2 + r^2 - 3 dr) = -1$ by \Cref{lem:E}.\footnote{Here we used Cassini's identity $dr - F_{2j-1}^2 = -1$ and the recursion $F_{2j-1} + r = d$.} In fact, this is precisely the critical term $r = \floor{\epsilon_{\opt} d}$.\footnote{Since $(\epsilon_{\opt} d - r)(\epsilon_{\opt}^{-1} d - r) = 1$ by \Cref{lem:E} and the second factor is larger than $1$, the first factor must be strictly between $0$ and $1$, and thus $r = \floor{\epsilon_{\opt} d}$.} A subrepresentation $(U,W)$ of dimension vector $(d, d+r)$ thus witnesses the failure of optimal expansion. There can be several such subrepresentations that arise from a common subspace $W$ of dimension $d+r$. The largest possible $U$ for which $(U,W)$ is a subrepresentation is the subspace
\[
L_W = \{ w \in W \mid T_1 w, T_2 w \in W \}.
\]
We call the subspace $W$ of dimension $d+r$ a \emph{witness} for the failure of optimal expansion if $\dim L_W \geq d$. For uniformly random $T_1, T_2 \in \Mat_n(\F_q)$, let $X$ be the number of witnesses, so that $p_j \leq \P(X = 0)$.
We estimate this probability by comparing $X$ to the number $X_c$ of subrepresentations of dimension vector $(c, d+r)$ for $c \in \{ d, d+1 \}$.

\begin{lemma}
We have $0 \leq X_d - X \leq C_q q^d X_{d+1}$.
\end{lemma}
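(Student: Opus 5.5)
We count subrepresentations $(U,W)$ of dimension vector $(d,d+r)$ according to the witness $W$ they come from. For a fixed $(d+r)$-dimensional $W$, the subspaces $U$ with $(U,W)$ a subrepresentation are exactly the $d$-dimensional subspaces of $L_W$. Hence
\[
X_d = \sum_{W} \binom{\dim L_W}{d}_q ,
\]
where $W$ runs over the subspaces of dimension $d+r$. Likewise $X_{d+1} = \sum_W \binom{\dim L_W}{d+1}_q$. The witnesses are precisely the $W$ with $\binom{\dim L_W}{d}_q \geq 1$, and each contributes $1$ to $X$. So
\[
X_d - X = \sum_{W \colon \dim L_W \geq d} \left( \binom{\dim L_W}{d}_q - 1 \right) \geq 0,
\]
which is the lower bound.

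For the upper bound it suffices to show, for every integer $m \geq d$,
\[
\binom{m}{d}_q - 1 \leq C_q q^d \binom{m}{d+1}_q .
\]
Summing this over the witnesses $W$, with $m = \dim L_W$, gives the claim. For $m = d$ the left side is $0$. For $m \geq d+1$ we compare the $q$-binomials via the ratio
\[
\frac{\binom{m}{d}_q}{\binom{m}{d+1}_q} = \frac{q^{d+1}-1}{q^{m-d}-1}.
\]
Since $m - d \geq 1$ and $q \geq 2$, the denominator is at least $q-1 \geq 1$, and the numerator is at most $q^{d+1}$. This gives $\binom{m}{d}_q \leq q^{d+1}(q-1)^{-1}\binom{m}{d+1}_q$.

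It remains to absorb $q/(q-1)$ into $C_q$. We have $C_q = \prod_{i\geq 1}(1-q^{-i})^{-1} \geq (1-q^{-1})^{-1} = q/(q-1)$, so $\binom{m}{d}_q \leq C_q q^d \binom{m}{d+1}_q$. Dropping the $-1$ only weakens the inequality, so the displayed bound follows.

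The main obstacle is just this ratio bound for $q$-binomials, specifically getting a constant that is uniform in $m$. The worst case is $m = d+1$: there the ratio is $(q^{d+1}-1)/(q-1) \approx q^d \cdot q/(q-1)$, which is exactly what forces the factor $C_q q^d$ rather than $q^d$ alone. For larger $m$ the ratio only decreases. No probabilistic input is needed, since the inequality holds pointwise for every pair $T_1,T_2$.
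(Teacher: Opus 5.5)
Your proposal is correct and follows essentially the paper's argument. Both write $X$, $X_d$, $X_{d+1}$ as sums over $(d+r)$-dimensional $W$ in terms of $\kappa=\dim L_W$, and bound each term with $\kappa>d$ via the ratio $\frac{q^{d+1}-1}{q^{\kappa-d}-1}$. The only cosmetic difference is the last step: the paper bounds this ratio by $\binom{d+1}{d}_q\le C_q q^d$, while you bound it by $q^{d+1}/(q-1)$ and then use $C_q\ge q/(q-1)$.
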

\begin{proof}
Let $W$ run over all subspaces of dimension $d+r$, and let $\kappa = \dim L_W$. Then
\[
X = \sum_W \textbf{1}_{\kappa \geq d}
\qquad
\text{and}
\qquad
X_c = \sum_W \binom{\kappa}{c}_q
\quad (c \in \{ d, d+1 \}).
\]
Let us inspect $X_d - X$. Its terms with $\kappa \leq d$ contribute the same to both sums. Each term with $\kappa > d$ can be bounded by
\[
\binom{\kappa}{d}_q
=
\frac{q^{d+1} - 1}{q^{\kappa - d} - 1} \binom{\kappa}{d+1}_q
\leq
\binom{d+1}{d}_q \binom{\kappa}{d+1}_q
\leq
C_q q^d \binom{\kappa}{d+1}_q.
\]
Summing over all $W$ gives $0 \leq X_d - X \leq C_q q^d X_{d+1}$.
\end{proof}

\begin{proposition}[first moment]
$\Ex X \to \lambda_q$ as $j \to \infty$.
\end{proposition}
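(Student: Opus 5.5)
Using the preceding lemma, $X_d - C_q q^d X_{d+1} \leq X \leq X_d$, so it suffices to show $\Ex X_d \to \lambda_q$ and $q^d \Ex X_{d+1} \to 0$; taking expectations then gives $\Ex X \to \lambda_q$.

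For $\Ex X_d$ I would invoke \Cref{lem:expected-number-of-subrepresentations}. Here $n = 2d$, $e = d+r$, and $d$, $e-d = r$, $n-d = d$, $n-e = d-r = F_{2j-1}$ all tend to infinity, so the asymptotic part of the lemma applies. It gives $\Ex X_d \sim C_q^2 q^{E(d,d+r)}$. By \Cref{lem:E} with $n=2d$ and the Cassini computation recorded in the footnote, $E(d,d+r) = -1$. Hence $\Ex X_d \to C_q^2 q^{-1} = \lambda_q$. To be safe I would recheck that the asymptotic is genuinely an asymptotic: the proof of that lemma gives exactly
\[
\binom{n}{d}_q \binom{n-d}{e-d}_q q^{-kd(n-e)},
\]
and each $q$-binomial is asymptotic to $C_q q^{(\cdot)}$ when both of its parameters diverge.

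For $X_{d+1}$ I would again use the upper bound of \Cref{lem:expected-number-of-subrepresentations}, namely $\Ex X_{d+1} \leq C_q^2 q^{E(d+1,d+r)}$, and compute the exponent directly from the formula for $E$ with $k=2$ and $n = 2d$:
\[
E(d+1,d+r) = (d+1)(d-1) + (d+r)(d-r) - 3(d+1)(d-r).
\]
Comparing with $E(d,d+r) = d^2 + (d+r)(d-r) - 3d(d-r) = -1$, the difference is
\[
E(d+1,d+r) - E(d,d+r) = -1 - 3(d-r).
\]
So $E(d+1,d+r) = -2 - 3(d-r) = -2 - 3F_{2j-1}$. Then
\[
q^d \Ex X_{d+1} \leq C_q^2 q^{\,F_{2j} - 3F_{2j-1} - 2} = C_q^2 q^{\,F_{2j-2} - 2F_{2j-1} - 2},
\]
which tends to $0$ because $F_{2j-1} > F_{2j-2}$ and $F_{2j-1} \to \infty$. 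This is the step where there is a potential obstacle: the factor $q^d$ from the previous lemma must be beaten by the extra codimension of increasing $c$. The computation shows the saving is $3(d-r) \approx 3d/\varphi$, which is larger than $d$ because $3(1-\epsilon_{\opt}) > 1$. If this margin had failed, I would instead sharpen the bound $\binom{\kappa}{d}_q \leq C_q q^d\binom{\kappa}{d+1}_q$, but here it suffices.

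Combining the two limits, we get $\limsup \Ex X \leq \lim \Ex X_d = \lambda_q$ and $\liminf \Ex X \geq \lambda_q - C_q \lim q^d \Ex X_{d+1} = \lambda_q$, which proves the proposition.
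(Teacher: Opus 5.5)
Your proposal is correct and follows the paper's argument. Like the paper, you sandwich $X$ between $X_d - C_q q^d X_{d+1}$ and $X_d$, and you get $\Ex X_d \to \lambda_q$ from the asymptotic part of \Cref{lem:expected-number-of-subrepresentations} with $E(d,d+r)=-1$. You then make $q^d\,\Ex X_{d+1}$ vanish using $E(d+1,d+r)-E(d,d+r) = -1-3(d-r)$. The only difference is cosmetic: you keep the exact exponent $F_{2j-2}-2F_{2j-1}-2$, whereas the paper uses the cruder bound $-1-3(d-r)\leq -1-1.5d$.
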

\begin{proof}
\Cref{lem:expected-number-of-subrepresentations} gives $\Ex X_d \sim C_q^2 q^{E(d,d+r)} = \lambda_q$
and $\Ex X_{d+1} \ll_q q^{-1.5d}$.\footnote{We have $E(d,d+r) = -1$ and $E(d+1,d+r) - E(d,d+r) = -1 - 3(d-r) \leq -1 - 1.5d$.}
The preceding lemma now implies $\Ex (X - \lambda_q) \to 0$, as claimed.
\end{proof}

For the second moment, we need the two lemmas about pairs of distinct witnesses $W, W'$. Two subspaces of dimension $d+r$ in $\F_q^{2d}$ intersect in dimension at least $2r$, and we write $\dim(W \cap W') = 2r + w$ with $w \geq 0$.

\begin{lemma} \label{lem:expected-number-of-pairs}
The expected number of pairs $(U,W), (U',W')$ counted by $X_d$ with $W \neq W'$, $\dim(U \cap U') = u$ and $\dim(W \cap W') = 2r + w$ is
\[
\ll_q q^{- u^2 + w (3 u - w - 2 r)}.
\]
If $w = 0$, the events that $(U,W)$ and $(U',W')$ are subrepresentations are independent.
\end{lemma}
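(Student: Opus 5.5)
The plan is to compute the expectation directly, as in the proof of \Cref{lem:expected-number-of-subrepresentations}. It equals the number of configurations $(U,W,U',W')$ with the prescribed intersection dimensions, times the probability that both $(U,W)$ and $(U',W')$ are subrepresentations. I will compare it with the square $(\Ex X_d)^2 \asymp_q q^{2E(d,d+r)} = q^{-2}$ of the unconstrained first moment. The claimed bound should then come from two separate corrections:
\begin{itemize}
\item a counting deficit, which I expect to be $q^{-u^2 + uw - w^2 - 2rw}$ up to constants;
\item a probability surplus of exactly $q^{2uw}$.
\end{itemize}
Their product gives the exponent $-u^2 + w(3u - w - 2r)$.

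\emph{Probability.} Fix a configuration. Choose a basis of $U \cap U'$ ($u$ vectors), a complement of it in $U$ and a complement in $U'$ (each $d-u$ vectors). Together these are $2d-u$ linearly independent vectors of $U+U'$. For a uniform $T_i$, the images of these vectors are independent and uniform in $\F_q^{n}$. The event $T_iU \leq W$, $T_iU' \leq W'$ imposes three conditions:
\begin{itemize}
\item the $u$ vectors of $U\cap U'$ must map into $W\cap W'$, of codimension $2d-2r-w$;
\item the two complements must map into $W$, respectively $W'$, each of codimension $d-r$.
\end{itemize}
So for each of the two matrices the probability is $q^{-u(2d-2r-w) - 2(d-u)(d-r)}$. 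Compared with the product $q^{-2d(d-r)}$ of the single probabilities, this is larger by exactly $q^{uw}$, hence by $q^{2uw}$ for the pair. For $w=0$ the factor is exactly $1$, and more precisely the joint law factorizes. This proves the independence statement: the conditions on $U\cap U'$ are $u(2d-2r)$ linear conditions, which is precisely the sum of the conditions the two events impose separately.

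\emph{Counting.} I will count in stages, using $\binom{a}{b}_q \leq C_q q^{b(a-b)}$ throughout.
\begin{itemize}
\item $W$: about $q^{(d+r)(d-r)}$ choices.
\item $W'$ with $\dim(W\cap W') = 2r+w$: choose the intersection $I \leq W$, then a complement of $W\cap W'$ in $W'$ avoiding $W$. This gives about $q^{(2r+w)(d-r-w) + (d-r-w)(d-r)}$ choices, which is $q^{(d+r)(d-r)} \cdot q^{-w^2}$ up to constants, since the exponent equals $(d+r)(d-r) - w^2$.
\item $U \leq W$: about $q^{dr}$ choices.
\item $U' \leq W'$ with $\dim(U\cap U') = u$: here $U\cap U' \leq U\cap W'$, and $\dim(U \cap W') \geq u$ is forced.
\end{itemize}
For the last step I will introduce the auxiliary parameter $a = \dim(U\cap W\cap W')$ and count the choices of $U$ with given $a$. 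I then choose a $u$-dimensional subspace of $U\cap W'$ and extend it inside $W'$ to a $d$-dimensional space meeting $U$ only there. Afterwards I maximise the resulting exponent over $a$ between $u$ and $2r+w$; this loses only a factor $O_q(1)$ after summing a geometric series. The expected outcome is that the extra constraints cost $q^{-2rw}$ from forcing $U$ to meet $W'$ beyond the generic dimension, plus $q^{-u^2+uw}$ from forcing $U'$ to meet $U$ in dimension $u$. The $q$-binomial prefactors contribute only $C_q^{O(1)}$.

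\emph{Main obstacle.} The hard step is this last count, namely showing that the optimum over $a$ gives no exponent worse than $-u^2 + uw - w^2 - 2rw$. I would first check the two extreme cases, $a$ generic and $a = u$, and then use concavity of the exponent, which is quadratic in $a$, to conclude that the maximum is attained at an endpoint.
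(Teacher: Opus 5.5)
Your probability computation is the paper's: you split $U+U'=(U\cap U')\oplus U_1\oplus U_1'$, so the three conditions constrain disjoint parts of each $T_i$. The surplus $q^{2uw}$, the $w=0$ independence, and your target counting deficit $-u^2+uw-w^2-2rw$ are all correct.

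The gap is in where that deficit comes from. The identity in your $W'$ step is false:
\[
(2r+w)(d-r-w)+(d-r-w)(d-r)=(d-r-w)(d+r+w)=(d+r)(d-r)-w^2-2rw .
\]
So the full cost $w(2r+w)$ of requiring $\dim(W\cap W')=2r+w$ is already paid when you choose $W'$. Conversely, the $U$ stage cannot supply a factor $q^{-2rw}$. Every $d$-dimensional $U\le W$ meets $W\cap W'$ in dimension at least $r+w$, and a generic one meets it in exactly $r+w$, at no cost. Hence the step you call the main obstacle asks you to show that, for fixed $W,W'$, the number of admissible $(U,U')$ is $\ll_q q^{2dr-2rw-u^2+uw}$. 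This is false whenever $rw\ge 1$. For $u=w$, take $U$ generic, $A\le U\cap W'$ of dimension $w$, and $U'/A$ a complement of $(U\cap W')/A$ in $W'/A$; this already gives at least $q^{dr}\cdot q^{rw}\cdot q^{r(d-w)}=q^{2dr}$ pairs. Two errors cancel in your bookkeeping, so the plan cannot be carried out as written. With the corrected $W'$ exponent you only need $q^{2dr-u^2+uw}$ from the last stage, and that is true.

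For that last stage, two further corrections. First, the range of your auxiliary parameter $a=\dim(U\cap W\cap W')$ is $a\ge\max(u,r+w)$, not $a\ge u$. Second, concavity alone does not put the maximum at an endpoint. You have $\ll_q q^{dr+a(r+w-a)}$ choices of $U$, $\ll_q q^{u(a-u)}$ of $U\cap U'$, and $\ll_q q^{(d-u)r}$ of $U'$. Relative to $q^{2dr}$ the exponent is $f(a)=a(r+w-a)+u(a-u)-ur$, a concave quadratic with vertex $(u+r+w)/2\le\max(u,r+w)$. It is therefore decreasing on the feasible range, with maximum $f(\max(u,r+w))=uw-u^2$ and at least $q^{-t^2}$ decay beyond. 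On your interval $[u,r+w]$ you would instead pick up the larger vertex value $(u+r+w)^2/4-u^2-ur$.

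The paper avoids the stratification entirely with a cruder overcount. It chooses $B=W\cap W'$, then $A=U\cap U'\le B$, then $W\supseteq B$ and $U$ with $A\le U\le W$, and likewise $W'$ and $U'$. The configurations inject into these choices, whose number is $\ll_q q^{c}$ with $c=(2r+w)(2d-2r-w)+u(2r+w-u)+2\bigl((d-r-w)(d-r)+(d-u)r\bigr)$. Then $c-2u(2d-2r-w)-4(d-u)(d-r)=-u^2+w(3u-w-2r)-2$ follows directly, with no optimization.
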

\begin{proof}
Let $A = U \cap U'$, $B = W \cap W'$, and write $U = A \oplus U_1$, $U' = A \oplus U_1'$. The pairs $(U,W)$, $(U',W')$ are subrepresentations if and only if $T_i A \leq B$, $T_i U_1 \leq W$, $T_i U_1' \leq W'$ for $i = 1,2$. The sum $A + U_1 + U_1'$ is direct, so these conditions restrict disjoint parts of $T_i$ and are independent. Their probabilities are $q^{-2a}$, $q^{-2b}$, $q^{-2b}$, where $a = u(2d - 2r - w)$ and $b = (d-u)(d-r)$. For $w = 0$, the product of these probabilities has exponent $-4d(d-r)$, which is twice the exponent $-2d(d-r)$ for a single pair $(U,W)$, so the two events are independent. The expected number in question is the sum of these probabilities over all pairs $(U,W)$, $(U',W')$ of subspaces with the given dimensions, and each summand equals $q^{-2a-4b}$. The number of pairs is at most the number of ways to choose $B$, then $A \leq B$, and then $W \supseteq B$ and $U$ with $A \leq U \leq W$, and likewise $W'$ and $U'$.
Counting these choices and multiplying by $q^{-2a-4b}$ gives the claimed bound.\footnote{The count is $\ll_q q^{c}$ with $c
= 
(2r+w)(2d-2r-w) 
+ u(2r+w-u) 
+ 2( (d-r-w)(d-r) + (d-u)r )$.
Note that since $d^2 + r^2 - 3dr = 1$, we have $c - 2a - 4b = -u^2 + w(3u - w - 2r) - 2$.}
\end{proof}

\begin{lemma} \label{lem:dimension-of-intersection}
Let $W \neq W'$ be witnesses, and let $c = d - r - w$ be the codimension of $W \cap W'$ in $W$. Then $\dim(L_W \cap L_{W'}) \geq d - 3c$.
\end{lemma}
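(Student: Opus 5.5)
Write $B = W \cap W'$, so $\dim B = 2r + w = (d+r) - c$. The plan is to exhibit $L_W \cap L_{W'}$ as the kernel of a linear map and bound the dimension of that kernel from below.

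First, observe that
\[
L_W \cap L_{W'} = \{ x \in B \mid T_1 x, T_2 x \in B \},
\]
since $x \in L_W \cap L_{W'}$ means exactly that $x, T_1x, T_2x$ lie in both $W$ and $W'$. Hence $L_W \cap L_{W'}$ contains $L_W \cap B \cap \{ x \mid T_i x \in W' \text{ for } i = 1,2\}$. Equivalently, it contains the kernel of the linear map
\[
\phi \colon L_W \longrightarrow W/B \oplus (W/B)^{\oplus 2}, \qquad x \mapsto \left( x + B,\; T_1 x + B,\; T_2 x + B \right),
\]
which is well defined because $x, T_1 x, T_2 x \in W$ for $x \in L_W$. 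Its kernel is precisely $\{x \in L_W \cap B \mid T_i x \in B\} = L_W \cap L_{W'}$.

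Next, since $W/B$ has dimension $c$, the target of $\phi$ has dimension $3c$. Therefore
\[
\dim (L_W \cap L_{W'}) \geq \dim L_W - 3c \geq d - 3c,
\]
where the last step uses that $W$ is a witness, so $\dim L_W \geq d$.

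The only point needing care is the identification of the kernel of $\phi$ with $L_W \cap L_{W'}$. This rests on the fact that $T_i x$ automatically lies in $W$ for $x \in L_W$, so membership in $W'$ is equivalent to membership in $B$. With that in place the argument is a single rank count, and it does not even use that $W'$ is a witness beyond its dimension, nor the hypothesis $W \neq W'$, which only guarantees $c \geq 1$.
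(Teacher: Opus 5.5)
Your proof is correct and is essentially the paper's argument. The paper intersects $L_{W'}$ with $W$ (losing at most $c$ dimensions) and then takes the kernel of $x \mapsto (T_1x+W,\,T_2x+W)$ into $((W+W')/W)^2$ (losing at most $2c$). You combine both steps into the single map $L_W \to (W/B)^{\oplus 3}$, with the roles of $W$ and $W'$ swapped, and your remark that only one of the two witness hypotheses is needed applies equally to the paper's version.
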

\begin{proof}
Since $L_{W'} \leq W'$ and $W \cap W'$ has codimension $c$ in $W'$, the subspace $A = L_{W'} \cap W$ has dimension at least $\dim L_{W'} - c \geq d - c$. For $x \in A$ we have $T_i x \in W'$, so the linear map $x \mapsto (T_1 x + W, T_2 x + W)$ maps $A$ into $((W + W')/W)^2$, which has dimension $2c$. Its kernel $A \cap L_W$ thus has dimension at least $d-3c$.
\end{proof}

\begin{proposition}[second moment] \label{prop:second-moment}
$\Ex X(X-1) \leq \lambda_q^2 + o(1)$ as $j \to \infty$.
\end{proposition}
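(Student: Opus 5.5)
We bound $\Ex X(X-1)$, the expected number of ordered pairs $(W,W')$ of distinct witnesses. To each such pair we attach a pair of subrepresentations counted by $X_d$ and apply \Cref{lem:expected-number-of-pairs}. Write $\dim(W\cap W') = 2r+w$ and $c = d-r-w$.

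\emph{Case $w=0$.} Every pair of distinct witnesses with $w=0$ yields at least one pair $(U,W),(U',W')$ of $d$-dimensional subrepresentations with $U\le L_W$, $U'\le L_{W'}$. So this part of $\Ex X(X-1)$ is at most the expected number of pairs counted by $X_d$ with $W\ne W'$ and $w=0$. By the independence statement in \Cref{lem:expected-number-of-pairs}, that quantity is at most $\sum_{(U,W)}\sum_{(U',W')}\P(U,W)\P(U',W') \le (\Ex X_d)^2$. By the first moment proposition, $(\Ex X_d)^2 \to \lambda_q^2$. This is the main term.

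\emph{Case $w\ge 1$.} Here we must avoid overcounting, since one witness may carry many $U$. For each pair of witnesses, fix choices $U\le L_W$, $U'\le L_{W'}$ of dimension $d$ with $\dim(U\cap U')$ as large as possible. \Cref{lem:dimension-of-intersection} lets us choose them with $u=\dim(U\cap U') \ge d-3c$. The map from witness pairs to subrepresentation pairs is injective, because it remembers $W,W'$. Hence this part of $\Ex X(X-1)$ is at most
\[
\sum_{w\ge1}\ \sum_{u\ge \max(0,d-3c)} \ll_q q^{-u^2+w(3u-w-2r)}.
\]
The sum over $u$ has $O(d)$ terms, so it suffices to show the exponent is $\le -\delta d$ uniformly, for some $\delta>0$.

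The exponent $f(u,w) = -u^2+3uw-w^2-2rw$ is a concave quadratic in $u$, maximized at $u=3w/2$. We split according to the size of $c$.
\begin{itemize}
\item If $c$ is small, say $c\le \theta d$, then $u\ge d-3c$ is large. Write $w = d-r-c \approx (1-\epsilon_{\opt})d$ and $r\approx \epsilon_{\opt} d$. The constraint $u\le d$ together with $u\ge d-3c$ forces $u$ near $d$. Then $f \approx -d^2\bigl(1-3(1-\epsilon_{\opt})+(1-\epsilon_{\opt})^2+2\epsilon_{\opt}(1-\epsilon_{\opt})\bigr)$. Using $\epsilon_{\opt}^2 = 3\epsilon_{\opt}-1$, the bracket simplifies to $\epsilon_{\opt}$, which is positive, so $f\approx -\epsilon_{\opt} d^2$. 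It may be cleaner to evaluate $f$ exactly at $u=d$, $w=d-r-c$ and use $d^2+r^2-3dr=1$; this gives a quantity $-\Theta(d^2)$ plus corrections linear in $c$, of size $O(cd)$.
\item If $c$ is large, i.e.\ $w\le (1-\epsilon_{\opt}-\theta)d$, maximize over all $u\le d$. When $3w/2\le d$, the maximum is $f(3w/2,w)=\tfrac54 w^2-2rw = w\bigl(\tfrac54 w-2r\bigr)$. This is negative and at most $-w\cdot\Omega(d)$, provided $\tfrac54 w < 2r$. That holds because $w\le d-r$ and $\tfrac54(1-\epsilon_{\opt}) \approx 0.955 < 2\epsilon_{\opt} \approx 0.764$ fails. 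So instead I will use the constraint $u\ge d-3c$ together with $u\le d$, and evaluate $f$ at the endpoints.
\end{itemize}

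I expect the main obstacle to be this last case analysis. One must show that $f(u,w)\le -\delta\max(w,d)$ over the whole admissible region $\{w\ge1,\ \max(0,d-3c)\le u\le \min(d,2r+w)\}$, where $u\le \dim(W\cap W')$ is the relevant upper bound. I would first try to check the vertices of this polygon, using $r/d\to\epsilon_{\opt}$ and Cassini's identity. Near $w$ small, the linear term $-2rw$ dominates, because $u\le 3c$ small relative to $r$ fails only when $u$ is large. For small $w$ and large $u$, the $-u^2$ term dominates. Once the uniform bound $q^{-\delta d}$ holds, the $O(d^2)$ terms sum to $o(1)$. Combined with the $w=0$ case, this gives $\Ex X(X-1)\le\lambda_q^2+o(1)$.
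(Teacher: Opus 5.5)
Your reduction is the same as the paper's. You handle the $w=0$ pairs by independence and bound them by $(\Ex X_d)^2\to\lambda_q^2$. For $w\ge 1$ you use \Cref{lem:dimension-of-intersection} to pass to subrepresentation pairs with $u\ge d-3c$, then apply \Cref{lem:expected-number-of-pairs}. The gap is the step that carries all the content: a uniform bound $G(u,w)=-u^2+w(3u-w-2r)\le 1-\delta d$ on the admissible region. What you wrote toward it is incorrect. In your small-$c$ case the bracket equals $-1+3\epsilon_{\opt}-\epsilon_{\opt}^2=0$, not $\epsilon_{\opt}$. Exactly, using $d^2-3dr+r^2=1$, one gets $G(d,d-r-c)=1-c(d+c)$ and $G(d-3c,d-r-c)=1-c(4d-9r+c)$. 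So there is no $-\Theta(d^2)$ main term: the exponent is $+1$ at $c=0$ and only about $-0.56\,d$ at $c=1$. Whether $1-\Theta(cd)$ is negative therefore depends on precise constants, which your heuristic hides. Moreover, evaluating at $u=d$ does not give an upper bound. For small $c$ the vertex $3w/2\approx 0.93\,d$ lies below $d-3c$, so the maximum over $[d-3c,d]$ is attained at $u=d-3c$. Your large-$c$ case is, as you say yourself, left open.

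The missing idea is the right dichotomy: compare the vertex $3w/2$ of the concave parabola with the lower bound $d-3c=3w-(2d-3r)$.

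\emph{Vertex admissible.} If $3w/2\ge d-3c$, i.e.\ $w\le(4d-6r)/3$, the constraint caps $w$. The unconstrained maximum $w(\tfrac{5}{4}w-2r)$ is then at most $-w(27r-10d)/6\le-\delta d$, because $r/d\to\epsilon_{\opt}>10/27$. This is exactly how the bound you found to fail is rescued.

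\emph{Vertex below $d-3c$.} Otherwise the maximum is at $u=d-3c$. There the Cassini identity gives exactly $1-c(4d-9r+c)\le 1-(4d-9r)$, using $\epsilon_{\opt}<4/9$ and $c\ge 1$. The fact $c\ge1$ holds because $W\ne W'$; you never invoke it, and without it the bound is false.

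Summing $q^{1-\delta d}$ over the $O(d^2)$ pairs $(u,w)$ then gives $o(1)$, which completes the proof.
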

\begin{proof}
The expected value $\Ex X(X-1)$ is the sum of $\Pr(W \text{ and } W' \text{ are witnesses})$ over all pairs of distinct subspaces $W, W'$ of dimension $d+r$. 
If $W,W'$ are witnesses, then \Cref{lem:dimension-of-intersection} gives subrepresentations $(U,W)$, $(U',W')$ counted by $X_d$ with $\dim(U \cap U') \geq d - 3c$.
Hence $\Ex X(X-1)$ is at most the expected number of such pairs, which we group according to $u = \dim(U \cap U')$ and $w = \dim(W \cap W') - 2r$, and estimate each group with \Cref{lem:expected-number-of-pairs}.

If $w = 0$, the two events are independent, so those pairs contribute at most $(\Ex X_d)^2$, which tends to $\lambda_q^2$. For $w \geq 1$, the pairs with given $u$ and $w$ contribute $\ll_q q^{G}$, where $G = -u^2 + w(3u - w - 2r)$. 
We have $G \leq 1 - \delta d$ for some $\delta > 0$ and all large $d$.\footnote{$G$ is concave in $u$ with maximum at $3w/2$, and $u \geq d - 3c = 3w - (2d - 3r)$. If $3w/2 \geq d - 3c$, then $G \leq -w(27r - 10d)/6$, else $G \leq 1 - c(4d - 9r + c)$. Now use $w, c \geq 1$ and $10/27 < \epsilon_{\opt} < 4/9$.}
Now sum over the pairs $(u,w)$, the total contribution is $\ll_q d^2 q^{- \delta d} = o(1)$ and the claim follows.
\end{proof}

\begin{proof}[Proof of \Cref{thm:fibonacci}]
The two moment calculations and Cauchy-Schwarz give
\[
1 - p_j \geq 
\P(X \geq 1) \geq
\frac{(\Ex X)^2}{\Ex (X^2)} \geq
\frac{\lambda_q^2 + o(1)}{\lambda_q^2 + \lambda_q + o(1)} =
\frac{\lambda_q}{1 + \lambda_q} + o(1).
\]
For the upper bound, recall from the proof of \Cref{thm:optimal-expansion-along-suitable-dimensions} that subrepresentations violating expansion with $\dim U < d$ exist with probability $o(1)$. Any other violation is a subrepresentation $(U,W)$ with $\dim U = d$ and $\dim W \leq d + r$, so there must exist a witness $W' \geq W$. Hence $1 - p_j \leq \P(X \geq 1) + o(1) \leq \Ex X + o(1) \to \lambda_q$.
\end{proof}

\bibliographystyle{alpha}
\bibliography{biblio_offde}

\end{document}